\newtheorem{theorem}{Theorem}[section]
\newtheorem{proposition}[theorem]{Proposition}
\newtheorem{lemma}[theorem]{Lemma}
\newtheorem{corollary}[theorem]{Corollary}
\theoremstyle{definition}
\newtheorem{definition}[theorem]{Definition}
\newcommand{\res}{\mathbin{\upharpoonright}}
\newcommand{\seq}[1]{\langle #1 \rangle}
\newcommand{\forces}{\Vdash}
\begin{document}

\title{Limits to joining with generics and randoms}

\author{Adam R. Day}
\address{Department of Mathematics\\
University of California, Berkeley\\
Berkeley, California 94720 U.S.A.}
%\curraddr{}
\email{adam.day@math.berkeley.edu}

\author{Damir D. Dzhafarov}
\address{Department of Mathematics\\
University of California, Berkeley\\
Berkeley, California 94720 U.S.A.}
%\curraddr{}
\email{damir@math.berkeley.edu}

\thanks{The authors are grateful to Theodore Slaman for posing the question that motivated this article as well as for insightful discussions. The first author was supported by a Miller Research Fellowship in the Department of Mathematics at the University of California, Berkeley. The second author was supported by an NSF Postdoctoral Fellowship.}

\maketitle

\begin{abstract}
Posner and Robinson \cite{PS-1981} proved that if $S \subseteq \omega$ is non-computable, then there exists a $G \subseteq \omega$ such that $S \oplus G \geq_T G'$. Shore and Slaman \cite{SS-1999} extended this result to all $n \in \omega$, by showing that if $S \nleq_T \emptyset^{(n-1)}$ then there exists a $G$ such that $S \oplus G \geq_T G^{(n)}$. Their argument employs Kumabe-Slaman forcing, and so the set they obtain, unlike that of the Posner-Robinson theorem, is not  generic for Cohen forcing in any way. We answer the question of whether this is a necessary complication by showing that for all $n \geq 1$, the set $G$ of the Shore-Slaman theorem cannot be chosen to be even weakly $2$-generic. Our result applies to several other effective forcing notions commonly used in computability theory, and we also prove that the set $G$ cannot be chosen to be $2$-random.
\end{abstract}

\section{Introduction}\label{S:introduction}
Our starting point is the following well-known theorem of computability theory.

\begin{theorem}[Posner and Robinson \cite{PS-1981}, Theorem 1]\label{T:Posner_Robinson}
For all $S \subseteq \omega$, if $S$ is non-computable there exists $G \subseteq \omega$ such that $S \oplus G, S \oplus \emptyset' \geq_T G'$. (In particular, when $S \leq_T \emptyset'$ then $S \oplus G \equiv_T G' \equiv_T \emptyset'$.)
\end{theorem}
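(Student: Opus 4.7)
I plan to prove this by an effective Kleene--Post style finite-extension construction, building $G$ as the union of an increasing sequence $\sigma_0 \subset \sigma_1 \subset \cdots$ of finite binary strings with $\sigma_0 = \emptyset$. The construction will be recursive in $S \oplus \emptyset'$ and will decide $\Phi_e^G(e)$ at stage $e$. The key query at stage $e$, given $\sigma_e$, is whether there exists $\tau \supseteq \sigma_e$ with $\Phi_e^\tau(e)\downarrow$, a $\Sigma^0_1$ question in $\sigma_e$ answered by $\emptyset'$. If yes, let $\tau_e$ be the least such and set $\sigma_{e+1}$ to be $\tau_e$ extended by a single coding bit equal to $S(e)$, so that $\Phi_e^G(e)\downarrow$ is forced and one bit of $S$ is recorded inside $G$; if no, set $\sigma_{e+1}$ to be $\sigma_e$ extended by a $0$ followed by $S(e)$, with divergence of $\Phi_e^G(e)$ automatic. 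Since everything is uniformly recursive in $S \oplus \emptyset'$ and the outcome of stage $e$ is known to the construction, this yields $G' \leq_T S \oplus \emptyset'$ immediately.

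For the harder direction $G' \leq_T S \oplus G$, I would reconstruct the sequence $(\sigma_e)$ inductively from $S$ and $G$ without using $\emptyset'$. Given $\sigma_e$ and the bit $S(e)$, search in parallel for the shortest $\tau$ with $\sigma_e \subseteq \tau \subsetneq G$, $\Phi_e^\tau(e)\downarrow$, and $G(|\tau|) = S(e)$; if such a $\tau$ appears, the ``yes'' case occurred at stage $e$, so $e \in G'$ and $\sigma_{e+1} = \tau$ extended by $S(e)$. Otherwise, the ``no'' case occurred, so $e \notin G'$ and $\sigma_{e+1} = \sigma_e$ extended by $0$ and then $S(e)$. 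Once $\sigma_{e+1}$ is recovered we proceed to stage $e+1$, and the case distinction at each stage reads off $G'(e)$.

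The main obstacle is that the reconstructor cannot decide in finite time when to give up the above search and commit to the ``no'' case, so the reduction as stated is a priori only $\Sigma^0_1$ in $S \oplus G$. The Posner--Robinson insight, and the one place where noncomputability of $S$ enters essentially, is that the coding bits $S(e)$ embedded in $G$ act as diagonalization witnesses: any systematic misparsing of $G$ would translate into a computable procedure that agrees with $S$ on cofinitely many inputs, forcing $S$ to be computable and contradicting the hypothesis. Making this precise requires arranging appropriate padding at each stage so that the unique parsing of $G$ consistent with $S$ is the genuine one, and I expect the combinatorial bookkeeping of this encoding to be the real difficulty in a complete proof.
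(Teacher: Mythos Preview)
The paper does not prove Theorem~\ref{T:Posner_Robinson}; it is quoted as a background result from \cite{PS-1981}, and the only remark the paper makes about its proof is that Posner and Robinson use Cohen forcing and that the resulting $G$ can be taken to be $1$-generic. So there is no proof in the paper to compare against.

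That said, your outline is exactly the right framework --- a finite-extension (Cohen) construction that forces the jump while coding bits of $S$ into $G$ --- and your diagnosis of where the difficulty lies is accurate. The gap you yourself flag is genuine and is not yet closed: as written, your reconstruction procedure for $G'\leq_T S\oplus G$ has no effective stopping condition in the ``no'' case, so it is not a Turing reduction. Your last paragraph gestures at the fix but does not carry it out.

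The standard resolution is to place the coding bit so that it \emph{immediately} flags which case occurred, rather than appending it after an unbounded search. One clean way: at stage $e$ with $m=|\sigma_e|$, ask whether some $\tau\supseteq\sigma_e$ makes $\Phi_e^{\tau}(e)\!\downarrow$; if so take the least such and set $\sigma_{e+1}=\tau\frown S(|\tau|)$, and if not set $\sigma_{e+1}=\sigma_e\frown(1-S(m))$. In the reconstruction one dovetails, for $k\geq m$, the search for convergence of $\Phi_e^{G\upharpoonright k}(e)$ against the check $G(k)\neq S(k)$; the first event that occurs determines the case. The point you correctly anticipate is that a misparse --- the disagreement check firing before the genuine convergence witness in a ``yes'' case --- would let one read off $S(k)$ as $1-G(k)$ along a computable sequence of positions, and iterating this would compute a tail of $S$, contradicting the hypothesis that $S$ is non-computable. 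Making this precise is a short argument, but it is the missing step in your proposal.
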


In order to generalize the  Posner-Robinson theorem  to higher numbers of jumps, we need an additional condition on $S$. If we want $S \oplus G \geq_T G^{(n)}$, then we  need $S$ to be not computable in 
$\emptyset^{(n-1)}$, since otherwise 
$S \oplus G \leq_T \emptyset^{(n-1)} \oplus G \leq_TG^{(n-1)}$. 
In connection with their work on the definability of the jump, Shore and Slaman proved this is the only condition needed for the generalization.

\begin{theorem}[Shore and Slaman \cite{SS-1999}, Theorem 2.1]\label{T:Slaman_Shore}
For all $n \geq 1$ and all $S \subseteq \omega$, if $S \nleq_T \emptyset^{(n-1)}$ there exists $G \subseteq \omega$ such that $S \oplus G, S \oplus \emptyset^{(n)} \geq_T G^{(n)}$.
\end{theorem}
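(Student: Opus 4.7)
The plan is a forcing construction that builds $G$ as a union of finite initial segments $G = \bigcup_e \sigma_e$, carried out effectively relative to $S \oplus \emptyset^{(n)}$. Each stage addresses a single requirement ensuring that $G^{(n)}(e)$ is computable from $S \oplus G$. The second reduction, $G^{(n)} \leq_T S \oplus \emptyset^{(n)}$, will follow automatically provided the forcing relation for $\Sigma_n$ formulas is itself $\emptyset^{(n)}$-computable and the generic $G$ decides every such formula along the construction.

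The coding idea is an iterated Posner--Robinson trick. At stage $e$, one considers the $\Sigma_n$ statement $\phi_e(\sigma_e)$: ``there exists $\tau \supseteq \sigma_e$ such that $\tau$ forces $e \in G^{(n)}$.'' One selects two extensions $\rho_e^0, \rho_e^1$ of $\sigma_e$, of equal length, whose choice is dictated by the bit $S(e)$ and whose shapes reveal the truth value of $\phi_e$. Given $S \oplus G$, one then inductively computes $\sigma_e$, reads $S(e)$ from $G$, and thereby decides $G^{(n)}(e)$. The hypothesis $S \nleq_T \emptyset^{(n-1)}$ is essential: if $S$ were $\emptyset^{(n-1)}$-computable, then the forcing at level $n$ could anticipate the coded bits and defeat the construction.

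The main obstacle is that the naive iteration of Posner--Robinson fails for $n > 1$, since after one step one would need $G' \leq_T G \oplus \emptyset'$, which generically does not hold. The fix adopted by Shore and Slaman is Kumabe--Slaman forcing, whose conditions are pairs $(p, \mathcal{X})$ with $p$ a finite initial segment and $\mathcal{X}$ a finite collection of Turing functionals whose values on the generic are restricted by explicit avoidance clauses; extensions must respect these restrictions. The data in $\mathcal{X}$ makes it possible to show, by induction on $n$, that the forcing relation for $\Sigma_n$ statements is $\emptyset^{(n)}$-computable and that dense sets of conditions exist at each level for both the coding and the genericity requirements. I expect the hardest technical step to be developing this forcing calculus and verifying that $\mathcal{X}$ can always be enlarged consistently with both the $S$-coding step and the avoidance constraints, using exactly the hypothesis $S \nleq_T \emptyset^{(n-1)}$ at the inductive step from level $n-1$ to level $n$.
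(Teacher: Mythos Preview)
The paper does not itself prove this theorem; it is cited from Shore and Slaman \cite{SS-1999} as background. However, the paper does give the definition of Kumabe--Slaman forcing in Section~\ref{S:forcing}, and your description of it is inaccurate in a way that matters. You describe conditions as pairs $(p,\mathcal{X})$ with $p$ a finite initial segment of the generic real and $\mathcal{X}$ a finite collection of Turing functionals subject to avoidance clauses. This is backwards. In Kumabe--Slaman forcing the conditions are pairs $(\Phi,\vec{X})$ where $\Phi$ is a finite use-monotone Turing \emph{functional} and $\vec{X}$ is a finite set of \emph{reals}; the generic object is (a code for) the functional $\bigcup \Phi$, not a set built by Cohen-style extension of initial segments. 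The avoidance clause says that an extension of $(\Phi,\vec{X})$ may not add any axiom $\seq{\sigma,x,y}$ with $\sigma$ an initial segment of some $X\in\vec{X}$: the functional is frozen along the protected reals.

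This distinction is the whole point. Your Posner--Robinson-style plan of coding $S(e)$ into the $e$th block of $G$ and then reading off $G^{(n)}(e)$ fails for $n\ge 2$ precisely because Cohen conditions do not control higher jumps, as you note yourself. The Kumabe--Slaman fix is not to augment Cohen conditions with side information, but to change what the generic \emph{is}: one builds a functional $\Phi_G$ and arranges that $\Phi_G(S)$ computes $G^{(n)}$. The hypothesis $S\nleq_T\emptyset^{(n-1)}$ is used to show that one can always extend $\Phi$ to output a desired value along $S$ without violating the freezing constraints imposed by the reals in $\vec{X}$, which in the relevant application are $\emptyset^{(n-1)}$-computable. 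Your sketch does not contain this mechanism, and without it the construction you outline does not go through.
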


The proof of Shore-Slaman theorem is not a simple generalization of  Posner and Robinson's proof. 
While both theorems are proved using forcing constructions,  they differ in the underlying forcing notion used. 
 Posner and Robinson use Cohen forcing, and show that the set $G$ in Theorem \ref{T:Posner_Robinson} can actually be chosen to be Cohen $1$-generic.  Shore and Slaman use a considerably more intricate notion of forcing, due to Kumabe and Slaman (discussed further in Section \ref{S:forcing}). This raises the question of whether Theorem \ref{T:Slaman_Shore} \emph{can} be proved using Cohen forcing. 
  More generally, one can ask whether Theorem \ref{T:Slaman_Shore} can be proved using some other commonly used forcing notion, such as Jockusch-Soare forcing or Mathias forcing.

In this article, we give a negative answer to the above questions. We prove a general result that applies to most of the forcing notions $\mathbb{P}$ used in computability theory, and which roughly states that if $n$ is sufficiently large and $G \subseteq \omega$ is sufficiently generic for $\mathbb{P}$, then $G$ does not satisfy Theorem \ref{T:Slaman_Shore}. For Cohen forcing, $n = 1$ and weak $2$-genericity suffice. We then prove a similar result for randomness, showing that for each $n \ge 2$, the set $G$ in Theorem \ref{T:Slaman_Shore} cannot be chosen to be $n$-random.
The conclusion is that the complexity inherent in Kumabe-Slaman forcing is in fact essential to proof of Theorem \ref{T:Slaman_Shore}.

We refer the reader to Soare \cite{Soare-TA} and Downey and Hirschfeldt \cite{DH-2012} for background on computability theory and algorithmic randomness, respectively. A brief account of forcing in arithmetic, as we shall use it, is included in Section \ref{S:forcing} below.

\section{A non-joining theorem for generics}\label{S:generics}

The purpose of this section is to prove the following theorem. We shall prove in the next section a  theorem applying to forcing notions in general, of which this will be the special case for Cohen forcing.
We present this argument separately in order to make the basic idea easier to understand.

\begin{theorem}\label{T:Cohen_gen}
There exists a $\emptyset'$-computable perfect tree $T \subseteq 2^{<\omega}$ such that for all $S \in [T]$ and all $G \subseteq \omega$, if $G$ is weakly $2$-generic then $S \oplus G \ngeq_T \emptyset'$.
\end{theorem}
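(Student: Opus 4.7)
The plan is to construct $T$ via a $\emptyset'$-effective stage-by-stage splitting construction that, for every Turing functional $\Phi$, forces a certain $\Sigma^0_2$ set of finite strings to be dense in $2^{<\omega}$.

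For each Turing functional $\Phi$, let
\[
D_\Phi^T = \{\tau : \exists k\, \forall \nu \in T \cap 2^k\, \exists n\,[\Phi^{\nu \oplus \tau}(n){\downarrow} \neq \emptyset'(n) \ \vee\ \forall \mu' \supseteq \nu,\ \tau' \supseteq \tau\colon \Phi^{\mu' \oplus \tau'}(n){\uparrow}]\}.
\]
Because $T$ is $\emptyset'$-computable and the inner predicate is a disjunction of a $\Sigma^0_1(\emptyset')$ statement and a $\Pi^0_1$ statement, $D_\Phi^T$ is $\Sigma^0_2$. If $D_\Phi^T$ is dense in $2^{<\omega}$, then any weakly $2$-generic $G$ has some initial segment $\tau \subset G$ in $D_\Phi^T$; fixing an arbitrary $S \in [T]$ and setting $\nu = S\res k$, either $\Phi^{S \oplus G}(n){\downarrow}\neq \emptyset'(n)$ by oracle monotonicity of the converging computation, or $\Phi^{S \oplus G}(n){\uparrow}$ because no $\mu' \subset S,\ \tau' \subset G$ can extend $(\nu, \tau)$ and make $\Phi$ converge while any shorter candidates extend to $(\nu,\tau)$ and would themselves violate monotonicity. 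In either case $\Phi^{S \oplus G} \neq \emptyset'$, so no functional witnesses $\emptyset' \leq_T S \oplus G$.

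To enforce density, I enumerate the pairs $(e, \sigma)$ and handle them in stages. At stage $\langle e, \sigma \rangle$, with $\{\nu_1, \ldots, \nu_m\}$ the current splitting frontier of $T$, set $\tau_0 = \sigma$ and iterate $j = 1, \ldots, m$, using $\emptyset'$ to run in parallel the two searches:
\begin{enumerate}
\item[(i)] for $(\mu_j, \tau_j, n_j)$ with $\mu_j \supseteq \nu_j$, $\tau_j \supseteq \tau_{j-1}$, and $\Phi_e^{\mu_j \oplus \tau_j}(n_j){\downarrow}\neq \emptyset'(n_j)$;
\item[(ii)] for $(\mu_j, \tau_j, n_j)$ with $\mu_j \supseteq \nu_j$, $\tau_j \supseteq \tau_{j-1}$, and $\Phi_e^{\mu' \oplus \tau'}(n_j){\uparrow}$ for all $\mu' \supseteq \mu_j,\ \tau' \supseteq \tau_j$.
\end{enumerate}
Both searches are $\Sigma^0_1(\emptyset')$: the inner universal quantifier in (ii) is $\Pi^0_1$, hence $\emptyset'$-decidable. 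Whichever witness is found first is kept. After all $j$ are handled, the chosen $\mu_j$'s are padded to a common length, installed as the new frontier of $T$, and each split into two children to preserve perfection. The final $\tau = \tau_m$ extends $\sigma$ and certifies $\tau \in D_{\Phi_e}^T$ with $k = |\mu_m|$.

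The main obstacle is showing that the parallel search at each step must terminate. Suppose at some step neither (i) nor (ii) has a witness. Then for every extension $(\mu, \tau)$ of $(\nu_j, \tau_{j-1})$ and every $n$, on the one hand $\Phi_e^{\mu \oplus \tau}(n){\downarrow}$ implies $\Phi_e^{\mu \oplus \tau}(n) = \emptyset'(n)$, and on the other hand for every $n$ some further extension $(\mu', \tau')$ of $(\nu_j, \tau_{j-1})$ has $\Phi_e^{\mu' \oplus \tau'}(n){\downarrow}$. Hardwiring the fixed strings $\nu_j, \tau_{j-1}$ and the index of $\Phi_e$, this yields a total computable procedure that on input $n$ searches for any extension $(\mu', \tau')$ of $(\nu_j, \tau_{j-1})$ with $\Phi_e^{\mu' \oplus \tau'}(n){\downarrow}$ and returns the value, correctly producing $\emptyset'(n)$. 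This contradicts $\emptyset' \not\leq_T \emptyset$. Hence one of the two searches always halts, and processing all $(e, \sigma)$ yields the desired $\emptyset'$-computable perfect tree.
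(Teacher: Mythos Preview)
Your argument is correct. The route, however, differs from the paper's in one key respect: you diagonalize directly against $\emptyset'$, whereas the paper builds an auxiliary $\emptyset'$-computable set $B$ alongside $T$ and the dense sets $D_e$, and diagonalizes against $B$. In the paper's construction, at each stage fresh witnesses $x_{i,j}$ are chosen; after using $\emptyset'$ to decide whether $\Phi_e^{\sigma_{i,j}\oplus\tau_{j,i}}(x_{i,j})$ can be made to converge, the value $B(x_{i,j})$ is then \emph{set} to disagree. This guarantees success at every node without any appeal to the noncomputability of the target set, so no termination argument like your parallel-search lemma is needed. Your approach is more direct and yields $S\oplus G\not\geq_T\emptyset'$ immediately rather than via $S\oplus G\not\geq_T B$ for some auxiliary $B\leq_T\emptyset'$; the cost is the extra step showing that if neither search halts then $\emptyset'$ would be computable. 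The paper in fact remarks after its proof that a direct argument against any noncomputable $A\leq_T\emptyset'$ is possible, which is essentially your method, but points out that it is the auxiliary-set version that generalizes cleanly to the other forcing notions treated in the subsequent section.
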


The theorem establishes that for all $n \geq 2$, the set $G$ in Theorem \ref{T:Slaman_Shore} cannot be chosen to be $n$-generic.

\begin{corollary}\label{C:Cohen_gen}
For all $n \geq 2$, there exists an $S \nleq_T \emptyset^{(n-1)}$ such that for all $G \subseteq \omega$, if $G$ is weakly $2$-generic then $S \oplus G \ngeq_T \emptyset'$. In particular, $S \oplus G \ngeq_T G^{(n)}$.
\end{corollary}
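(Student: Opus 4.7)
The plan is to derive this corollary directly from Theorem \ref{T:Cohen_gen} via a simple cardinality argument, together with the observation that $\emptyset'$ is always computable from $G^{(n)}$. All the hard work has been done in the theorem, so this really is just a bookkeeping step.

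First I would invoke Theorem \ref{T:Cohen_gen} to obtain the $\emptyset'$-computable perfect tree $T \subseteq 2^{<\omega}$. Because $T$ is perfect, $[T]$ has cardinality $2^{\aleph_0}$, whereas the set of subsets of $\omega$ that are Turing-reducible to $\emptyset^{(n-1)}$ is countable. Hence there must exist some $S \in [T]$ with $S \nleq_T \emptyset^{(n-1)}$; fix any such $S$. (If one prefers an explicit construction rather than a cardinality argument, one could instead use $\emptyset^{(n-1)}$ as an oracle to follow the branching of $T$ and code $\emptyset^{(n)}$ into a path, but this is unnecessary for the statement as written.)

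For this $S$, Theorem \ref{T:Cohen_gen} applied to any weakly $2$-generic $G$ immediately yields $S \oplus G \ngeq_T \emptyset'$, which is the main assertion of the corollary. For the "in particular" clause, I would observe that for every $G \subseteq \omega$ and every $n \geq 1$ one has
\[ \emptyset' \leq_T \emptyset^{(n)} \leq_T G^{(n)}, \]
the last inequality because any $\Sigma^0_n$ question is in particular $\Sigma^{G}_n$. Therefore $S \oplus G \geq_T G^{(n)}$ would force $S \oplus G \geq_T \emptyset'$, contradicting the non-joining conclusion just established. No step here is a real obstacle; the entire content of the corollary is packaged inside Theorem \ref{T:Cohen_gen}, and the only point to be careful about is confirming that the perfectness of $T$ delivers a path of the required Turing complexity.
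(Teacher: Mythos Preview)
Your proposal is correct and follows essentially the same approach as the paper: invoke Theorem~\ref{T:Cohen_gen}, use perfectness of $T$ to pick a path $S \nleq_T \emptyset^{(n-1)}$, and then read off the non-joining conclusion, with the ``in particular'' clause following from $\emptyset' \leq_T G^{(n)}$. The paper's proof is terser (it does not spell out the cardinality argument or the chain $\emptyset' \leq_T \emptyset^{(n)} \leq_T G^{(n)}$), but the content is the same.
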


\begin{proof} Let $T \subseteq 2^{<\omega}$ be the tree obtained from Theorem~\ref{T:Cohen_gen}. As $T$ is perfect, we may choose an $S \in [T]$ such that $S \nleq_T \emptyset^{(n-1)}$. Then for every weakly $2$-generic set $G$, and so certainly for every $n$-generic $G$, we have $S \oplus G \ngeq_T \emptyset'$. In particular, $S \oplus G \ngeq_T G^{(n)}$.
\end{proof}

We proceed with the proof of the theorem.

\begin{proof}[Proof of Theorem \ref{T:Cohen_gen}]
Computably in $\emptyset'$, we construct the tree $T$, an auxiliary set $B \subseteq \omega$, and a sequence $\{D_e\}_{e \in \omega}$ of dense subsets of $2^{<\omega}$. Our objective is to meet the following requirements for all $e \in \omega$.
\[
\begin{array}{lll}
\mathcal{R}_e & : & \textrm{If } S \in [T] \textrm{ and } G \subseteq \omega \textrm{ meets } D_e, \textrm{then } \Phi_e(S \oplus G) \neq B.
\end{array}
\]
Of course, every weakly $2$-generic set meets each set $D_e$. And as $B$ will be $\emptyset'$-computable, meeting these requirements suffices.

\medskip
\noindent \emph{Construction.} We obtain $T$, $B$, and each $D_e$ as $\bigcup_s T_s$, $\bigcup_s B_s$, and $\bigcup_s D_{e,s}$, where $T_s$, $B_s$, and $D_{e,s}$ denote the portions of each of these sets built by the beginning of stage $s$. Initially, let $T_0 = \{\lambda\}$, where $\lambda$ is the empty string, and let $B_0 = D_{e,0} = \emptyset$ for all $e$.

At stage $s = \seq{e,t}$, assume $T_s$, $B_s$, and $D_{e,s}$ have been defined. Let $\seq{\sigma_i : i < n}$ enumerate all strings of the form $\tau b$, where $\tau$ is a maximal string in $T_s$ and $b \in \{0,1\}$. This step adds a split above each element of $T_s$. Let $\seq{\tau_j : j < m}$ enumerate all binary strings  smaller than $s$. For all $j < m$, we will enumerate an extension $\tau_j^*$ of $\tau_j$ into $D_e$, thus ensuring that $D_e$ is dense.

For all $i<n$ and $j<m$, we will choose a unique number $x_{i,j} > s$ not in $B_s$, and define sequences
\[
\sigma_i \preceq \sigma_{i,0} \preceq \cdots \preceq \sigma_{i,m-1}
\]
and
\[
\tau_j \preceq \tau_{j,0} \preceq \cdots \preceq \tau_{j,n-1}
\]
with $|\sigma_{i,j}| = |\tau_{j,i}|$, such that one of the following applies:
\begin{enumerate}
\item\label{I:conv} there is a $b \in \{0,1\}$ such that $\Phi_e^{\sigma_{i,j} \oplus \tau_{j,i}}(x_{i,j}) \downarrow = b$;
\item\label{I:strdiv} $\Phi_e^{\sigma \oplus \tau}(x_{i,j}) \uparrow$ for all $\sigma \succeq \sigma_{i,j}$ and $\tau \succeq \tau_{j,i}$ with $|\sigma| = |\tau|$.
\end{enumerate}
%To this end, assume inductively that $\sigma_{i,k}$ and $\tau_{j,l}$ have been defined for all $k<j$ and $l<i$. For convenience, let $\sigma_{i,-1} = \sigma_i$ and $\tau_{j,-1} = \tau_j$. Using $\emptyset'$, we can find a $\sigma \succeq \sigma_{i,j-1}$ and a $\tau \succeq \tau_{j,i-1}$ that satisfy \ref{I:conv} or \ref{I:strdiv} above, and we let $\sigma_{i,j}$ and $\tau_{j,i}$ be the least such $\sigma$ and $\tau$, respectively.

We define $\sigma_{i,j}$ and $\tau_{j,i}$ simultaneously. For convenience, let $\sigma_{i,-1} = \sigma_i$ and $\tau_{j,-1} = \tau_j$. 
Let  $(i,j)$ be the lexicographically least pair in $\omega \times \omega$ such that $\sigma_{i,j}$ and $\tau_{j,i}$ are not defined (this implies that $\sigma_{i,j-1}$ and $\tau_{j,i-1}$ are already defined). 
Using $\emptyset'$, we can find a $\sigma \succeq \sigma_{i,j-1}$ and a $\tau \succeq \tau_{j,i-1}$ that satisfy \ref{I:conv} or \ref{I:strdiv} above, and we let $\sigma_{i,j}$ and $\tau_{j,i}$ be the least such $\sigma$ and $\tau$, respectively.

To complete the construction, add $x_{i,j}$ to $B_{s+1}$ for all $i,j$ such that case \ref{I:conv} above applies with $b=0$. Then, for all $j < m$, let $\tau_j^*$ be the least extension of $\tau_{j,n-1}$ greater than $s$, and add $\tau_j^*$ to $D_{e,s+1}$. Finally, for all $i < n$, let $\sigma_i^*$ be an extension of $\sigma_{i,m-1}$ of length $|\tau_j^*|$, and add all initial segments of $\sigma_i^*$ to $T_{s+1}$.

\medskip
\noindent \emph{Verification.} The construction is $\emptyset'$-computable, and so, since at each stage $s$, only elements greater than $s$ are added to $B$ and $D_e$, it follows that $B$ and the sequence $\{D_e\}_{e \in \omega}$ are computable in $A$. It is clear that the $D_e$ are dense: given any $\tau \in 2^{<\omega}$, an extension of $\tau$ is added to $D_e$ at each sufficiently large stage $s = \seq{e,t}$.

Now fix $e$, let $G$ be any weakly $2$-generic set, and let $S$ be any element of $[T]$. Then we may choose a $\tau \prec G$ in $D_e$. Let $s$ be the least stage such that $\tau \in D_{e,s}$, and let $\sigma$ be a maximal initial segment of $S$ in $T_s$, so that $|\sigma| = |\tau|$. By construction, there is an $x$ such that one of the following cases applies:
\begin{enumerate}
\item $\Phi_e^{\sigma \oplus \tau}(x) \downarrow = 1 - B(x)$;
\item $\Phi_e^{\sigma^* \oplus \tau^*}(x) \uparrow$ for all $\sigma^* \succeq \sigma$ and $\tau^* \succeq \tau$.
\end{enumerate}
Now if case \ref{I:conv} holds, then $\Phi_e^{S \oplus G}(x) \downarrow = 1 - B(x)$ since $\sigma \preceq S$ and $\tau \preceq G$. And if case \ref{I:strdiv} holds, then it cannot be that $\Phi_e^{S \oplus G}(x)$  converges, else some initial segments of $S$ and $G$ would witness a contradiction.

We conclude that $\Phi_e(S \oplus G) \neq B$, and hence that requirement $\mathcal{R}_e$ is satisfied. This completes the verification and the proof.
\end{proof}

The proof given establishes that $S \oplus G \not \ge_T \emptyset'$ by constructing an auxiliary set $B$ below $\emptyset'$ and showing that $S \oplus G \not \ge_T B$. In fact this proof  can be easily modified to show that for any non-computable set $A \le_T \emptyset'$, there is a perfect tree $T$ computable in $\emptyset'$ such that for all $S \in [T]$ and all $G \subseteq \omega$, if $G$ is weakly 2-generic then $S \oplus G \not \ge_T A$. However, this argument does not generalize directly to other forcing notions.

\section{Extensions to other forcing notions}\label{S:forcing}

We now extend Theorem \ref{T:Cohen_gen} to other forcing notions. We assume familiarity with the basics of forcing in arithmetic, but as these formulations are often dependent on the nuances of the definitions, we include a brief review of some of the particulars of our treatment. Our approach is close to that of Shore \cite[Chapter 3]{Shore-TA}, with some variations. The goal is to define forcing in a way that is  general enough to cover the forcing notions most commonly used in computability theory.

\begin{definition}\label{D:forcing}
A \emph{notion of forcing} is a triple $\mathbb{P} = (P,\leq,V)$, where:
\begin{enumerate}
\item $P$ is an infinite subset of $\omega$;
\item $\leq$ is a partial ordering of $P$;
\item $V$ is a monotone map from $(P,\leq)$ to $(2^{<\omega}, \succeq)$ such that for each $n \in \omega$, the set of $p \in P$ with $|V(p)| \geq n$ is dense.
\end{enumerate}
\end{definition}

As is customary, we refer to the elements of $P$ as the \emph{conditions} of $\mathbb{P}$, and say $q \in P$ \emph{extends} $p \in P$ if $q \leq p$. We call the map $V$ a \emph{valuation}. For each $F \subseteq P$, we let 
$|V(F)|$ denote $\sup_{p \in F}|V(p)|$.

\begin{definition}
Let $\mathbb{P} = (P,\leq,V)$ be a forcing notion, and $F$ a filter on $(P,\leq)$. If $\mathcal{C}$ is a set of subsets of $P$, then $F$ is \emph{$\mathcal{C}$-generic for $\mathbb{P}$} if
\begin{enumerate}
\item\label{I:infinity} $|V(F)| = \infty$;
\item for every $C \in \mathcal{C}$, either $F \cap C \neq \emptyset$ or $F \cap \{p \in P: (\forall q \leq p)[q \notin C]\} \neq \emptyset$.
\end{enumerate}
\end{definition}

\noindent Condition \ref{I:infinity} above ensures that if $F$ is any generic filter then $V(F)$ uniquely determines a subset of $\omega$. If $F$ is $\mathcal{C}$-generic for $\mathbb{P}$ and $G=V(F)$, then we also say that $G$ is \emph{$\mathcal{C}$-generic for $\mathbb{P}$}.

The following definition is standard in effective applications of forcing.

\begin{definition}
Let $A \subseteq \omega$ be given, and let $\mathbb{P} = (P,\leq,V)$ be a notion of forcing.
\begin{enumerate}
\item $\mathbb{P}$ is \emph{$A$-computable} if $P$, $\leq$, and $V$ are $A$-computable.
\item A set $D \subseteq P$ is \emph{$A$-effectively dense} if there is an $A$-computable function that takes each $p \in P$ to some $q \leq p$ in $D$.
\end{enumerate}
\end{definition}

We work in the usual forcing language, consisting of the language of second-order arithmetic, augmented by a new set constant $\dot{G}$ intended to denote the generic real. The (strong) forcing relation $\forces_\mathbb{P}$ is defined recursively in the standard way, starting by putting $p \forces_\mathbb{P} \varphi$ for $p \in P$  if $\varphi$ is a true atomic sentence of first-order arithmetic, or if $\varphi$ is $\dot{n} \in \dot{G}$ (respectively, $\dot{n} \not \in \dot{G}$) for some $n < |V(p)|$ such that $V(p)(n) = 1$ (respectively, $V(p)(n) =0$). For conjunctions we write $p \forces_\mathbb{P} \varphi \wedge \psi$ if $p \forces_\mathbb{P} \varphi$ and $p \forces_\mathbb{P} \psi$. For existential formulas we write $p \forces_\mathbb{P} \exists x \varphi(x)$ if $p \forces_\mathbb{P} \varphi(\dot{n})$ for some $n \in \omega$. Finally for negations we write  $p \forces_\mathbb{P} \neg \varphi$ if for all $q \leq p$ it is not true that $q \forces_\mathbb{P} \varphi$.  

It is easy to see that if $G = V(F)$ for some filter $F$ on $(P,\leq)$ with $|V(F)| = \infty$, and if $p \forces \varphi(\dot{G})$ for some $p \in F$ and some $\Sigma^0_1$ sentence $\varphi$ of the forcing language, then $\varphi(G)$ holds. (Of course, this is just a consequence of the more powerful fact that forcing implies truth for any sufficiently generic real, but we shall not need that here.)

%It is easy to check that if  $\mathbb{P}$ is $A$-computable for some arithmetical set $A$, then for each $n \in \omega$ and each $\Sigma^0_n$ formula in the forcing language, any sufficiently generic filter $F$ on $\mathbb{P}$ will contain a condition that forces $\varphi$ or $\neg \varphi$, and forcing will imply truth for $G = V(F)$.

We introduce the following concept.

\begin{definition}
Let $A \subseteq \omega$ be given,  let $\mathbb{P} = (P,\leq,V)$ be a notion of forcing, and let 
$\{\varphi_i\}_{i\in \omega}$ be an enumeration of all $\Sigma^0_1$ formulas in the forcing language. We say that  $\mathbb{P}$ is \emph{$1$-decidable in $A$} if $\mathbb{P}$ is $A$-computable, and there is an $A$-computable function 
$f: P \times \omega \rightarrow P \times \{0,1\}$ such that  if $(q, t) = f(p, i)$ then 
\begin{enumerate}
\item $q \leq p$;
\item if $t=1$ then $q \forces_\mathbb{P} \varphi_i$;
\item if $t=0$ and  $q \forces_\mathbb{P}  \neg\varphi_i$.
\end{enumerate}
%it is and, for each $\Sigma^0_1$ sentence $\varphi(\dot{G})$ in the forcing language, the set of conditions in $\mathbb{P}$ that decide $\varphi$ is $A$-effectively dense, uniformly in an index for $\varphi$.
\end{definition}

\noindent Note that if $A \ge_T \emptyset'$, then  being 1-decidable in $A$ reduces to the set of conditions in $\mathbb{P}$ that decide $\varphi_i$ being $A$-effectively dense, uniformly in $i$. This is because $p$ forcing a  $\Sigma^0_1$ fact corresponds a $\Sigma^0_1$ fact holding of $V(p)$, and $V(p)$ is computable in $A$.

%It is possible to have an $A$-computable notion of forcing such that there is a $B$-computable function satisfying the above definition for some $B \ngeq_T A$. (For example, $A = \emptyset''$ and $B = \emptyset'$ work for Mathias forcing with computable sets, discussed below.) It would be reasonable to call such a notion $1$-decidable in $B$, rather than in $A \oplus B$ as defined above, but the latter will simplify our exposition below.

Let $\mathbb{C}$ denote Cohen forcing. The \emph{product notion} $\mathbb{C} \times \mathbb{P}$ is the notion of forcing whose elements are pairs $(\sigma,p) \in 2^{<\omega} \times P$ such that $|\sigma| = |V(p)|$, with extension defined componentwise. The induced valuation, $V_{\times}$, on the product, is defined by
\[
V_{\times} (\sigma,p) = \sigma \oplus V(p).
\]

\noindent Cohen forcing is easily seen to be $1$-decidable in $\emptyset'$.
%It follows that if $\mathbb{P}$ is $1$-decidable in $A$ for some $A \geq_T \emptyset'$, then the product $\mathbb{C} \times \mathbb{P}$ is also $1$-decidable in $A$.
We wish also to calibrate the level of $1$-decidability for some other common notions of forcing, to which to apply Theorem \ref{T:nonjoin_generic} below. These include the following:
\begin{itemize}
\item Jockusch-Soare forcing inside a non-empty $\Pi^0_1$ class with no computable member;
\item Sacks forcing with perfect binary trees;
\item Mathias forcing with conditions restricted to pairs $(D,E)$ such that $E$ is an infinite computable set.
%\item for all $n \geq 1$, Kumable-Slaman forcing with $\emptyset^{(n)}$-computable sets, i.e., with pairs $(\Phi,\vec{X})$ such that $\vec{X}$ is a finite set of $\emptyset^{(n)}$-computable sets.
\end{itemize}
For each of these notions the set of conditions can be coded as a subset of $\omega$, and there is a natural valuation map fitting Definition \ref{D:forcing} above.
%The valuation map for Jockusch-Soare forcing takes a non-empty special $\Pi^0_1$ class to the longest common initial segment of all the members of the class, while that for Mathias forcing takes a condition $(D,E)$ to the string $\sigma \in 2^{<\omega}$ of length $\min E$ such that $\sigma(i) = D(i)$ for all $i < |\sigma|$. The valuation for Kumabe-Slaman forcing takes $(\Phi,\vec{X})$ to the characteristic function of $\Phi$, truncated at $\max \Phi$.
(We refer the reader to \cite[Section 8.18]{DH-2012}, \cite[Definition 2.1]{CDH-2012}, and \cite[Example 3.14]{Shore-TA},
%and \cite[Definition 2.5]{SS-1999},
respectively, for explicit definitions.)

In addition, we wish to consider Kumabe-Slaman forcing, which is especially important in the present discussion because of its use in proving Theorem \ref{T:Slaman_Shore}. As this forcing may be less familiar, we include its definition. (See \cite[Definition 2.5]{SS-1999}, for complete details.) A Turing functional $\Phi$, regarded as a set of triples $\seq{\sigma,x,y}$ representing that $\Phi^\sigma(x) \downarrow = y$, is called \emph{use-monotone} if the following hold:
\begin{enumerate}
\item if $\seq{\sigma,x,y}$ and $\seq{\sigma',x',y'}$ belong to $\Phi$ and $\sigma' \prec \sigma$ then $x' < x$;
\item if $\seq{\sigma,x,y} \in \Phi$ and $x' < x$, then $\seq{\sigma',x',y'} \in \Phi$ for some $y'$ and $\sigma' \preceq \sigma$.
\end{enumerate}
\begin{definition}
Let $(P,\leq)$ be the following partial order.
\begin{enumerate}
\item The elements of $P$ are pairs $(\Phi,\vec{X})$, such that $\Phi$ is a finite $\{0,1\}$-valued use-monotone Turing functional, and $\vec{X}$ is a finite set of subsets of $\omega$.
\item $(\Psi,\vec{Y}) \leq (\Phi,\vec{X})$ in $P$ if:
\begin{enumerate}
\item $\Phi \subseteq \Psi$, and if $\seq{\sigma,x,y} \in \Psi - \Phi$ and $\seq{\sigma',x',y'} \in \Phi$ then $|\sigma| > |\sigma'|$;
\item $\vec{X} \subseteq \vec{Y}$, and if $\sigma$ is an initial segment of some $X \in \vec{X}$ and $\seq{\sigma,x,y} \in \Psi$ then $\seq{\sigma,x,y} \in \Phi$.
\end{enumerate}
\end{enumerate}
\end{definition}

\noindent Kumabe-Slaman forcing is the notion $(P,\leq,V)$ with $P$ and $\leq$ as above, and $V : P \to 2^{<\omega}$ defined by $V((\Phi,\vec{X})) = \Phi$. Of course, $P$ here cannot be  coded as a subset of $\omega$. However, our interest will be in a restriction of this forcing that can be so coded, namely, when the conditions are pairs $(\Phi,\vec{X})$ such that $\vec{X}$ consists of $\emptyset^{(n)}$-computable sets.

We have the following bounds on the complexity of each of these notions.

\begin{proposition}\label{P:decidability}
\
\begin{enumerate}
\item\label{I:Cohen} Cohen forcing is $1$-decidable in $\emptyset'$.
\item\label{I:JS} Jockusch-Soare forcing inside a non-empty $\Pi^0_1$ class with no computable member is $1$-decidable in $\emptyset'$.
\item\label{I:S} Sacks forcing with perfect trees is $1$-decidable in $\emptyset''$.
\item\label{I:Mathias} Mathias forcing with computable sets is $1$-decidable in $\emptyset''$.
\item\label{I:KS} Kumabe-Slaman forcing with $\emptyset^{(n)}$-computable sets is $1$-decidable in $\emptyset^{(n+2)}$.
%\item\label{I:Solovay} For all $n \geq 1$, Solovay forcing with $\Pi^{0,\emptyset^{(n-1)}}_1$ classes of positive measure is $1$-$\emptyset^{(n+1)}$-decidable.
\end{enumerate}
\end{proposition}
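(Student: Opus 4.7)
The plan is to handle all five clauses by a common template. Since each listed oracle is at least $\emptyset'$, the remark following the definition of 1-decidability applies: it suffices to show that for each $i$, the set of conditions that decide the $i$-th $\Sigma^0_1$ forcing formula $\varphi_i$ is effectively dense with respect to the specified oracle, uniformly in $i$. Given a $\Sigma^0_1$ formula $\varphi_i = \exists x\, \psi_i(x,\dot G)$ and a condition $p$, the oracle is asked whether there exists an extension $q \leq p$ whose valuation $V(q)$ witnesses the existential in the sense of strong forcing; if so, the oracle produces such a $q$, and otherwise $p$ itself already forces $\neg \varphi_i$ by the definition of forcing negation. In each case the content reduces to bounding the complexity of this search.

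For \ref{I:Cohen} and \ref{I:JS}, the search is $\Sigma^0_1$, so $\emptyset'$ suffices. Cohen forcing is immediate; for Jockusch-Soare forcing inside a fixed $\Pi^0_1$ class with no computable member, the key observation is that asking whether some string on the tree extends the current valuation and witnesses the existential is $\Sigma^0_1$, and passing to an appropriate $\Pi^0_1$ subclass concentrated below the chosen string is $\emptyset'$-uniform. For \ref{I:S} and \ref{I:Mathias}, one additional jump is required: in Sacks forcing, to force $\varphi_i$ one must locate a perfect subtree all of whose paths witness $\psi_i(n,\cdot)$ for some common $n$, and certifying that no such subtree exists is a $\Pi^0_2$ condition; in Mathias forcing with infinite computable reservoirs, the $\emptyset''$ cost is paid precisely to verify that the shrunken reservoir remains infinite, which is $\Pi^0_2$ for a computable set.

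Clause \ref{I:KS} is where I expect the main difficulty, and it is the only case where I do not simply invoke the template uniformly. For Kumabe-Slaman forcing restricted to conditions $(\Phi,\vec X)$ with $\vec X$ consisting of $\emptyset^{(n)}$-computable reals, even the extension relation itself is no longer computable: verifying clause (2b) of the order requires checking, for each new triple $\langle \sigma, x, y\rangle$ in $\Psi\setminus \Phi$, that $\sigma$ is not an initial segment of any member of $\vec X$, and since $\vec X$ consists of $\emptyset^{(n)}$-computable sets this is an $\emptyset^{(n)}$-decidable predicate. Consequently the set of conditions is $\emptyset^{(n+1)}$-decidable, and the search for an extension whose functional $\Psi \supsetneq \Phi$ witnesses $\varphi_i$ while respecting use-monotonicity and (2b) becomes $\Sigma^0_1$ over $\emptyset^{(n+1)}$, giving complexity $\emptyset^{(n+2)}$ overall. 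The principal technical task is the bookkeeping of which predicate sits at which level of the jump hierarchy, together with a check in each case that the combinatorial absence of a witnessing extension really does coincide with $p \forces_{\mathbb P} \neg \varphi_i$ as defined by the recursion on formulas; this coincidence holds because in each of these forcings the generic is determined by its finite valuations, so forcing atomic $\Sigma^0_1$ facts is witnessed at the level of the valuation.
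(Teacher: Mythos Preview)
Your overall template matches the paper: since each oracle is at least $\emptyset'$, it suffices to show that deciding each $\Sigma^0_1$ sentence is effectively dense relative to the stated oracle. The paper's proof is extremely terse---parts \ref{I:Mathias} and \ref{I:KS} merely cite Lemma~4.3 of \cite{CDH-2012} and Lemmas~2.10 and~2.11 of \cite{SS-1999}---so your more explicit treatment of those cases is compatible with what the authors intend.

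There is, however, a genuine gap in your argument for \ref{I:JS}. Your $\Sigma^0_1$ search looks for a string $\sigma$ on the computable tree $T_p$ that witnesses $\varphi_i$, but such a $\sigma$ may be a dead end of $T_p$, in which case $p\cap[\sigma]$ is empty and not a condition. Worse, it can happen that \emph{every} witnessing string in $T_p$ is a dead end; then no extension of $p$ forces $\varphi_i$ and in fact $p\forces_{\mathbb P}\neg\varphi_i$, yet your procedure would return $t=1$ together with an invalid $q$. The paper avoids this by invoking the low basis theorem idea directly: one asks the single $\Pi^0_1$ question of whether the subclass $\{X\in p:\neg\varphi_i(X)\}$ is nonempty, and either answer yields a correct extension $\emptyset'$-effectively.

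Your account of \ref{I:S} also diverges from the paper. Under the strong forcing relation defined here, $p\forces_{\mathbb P}\exists x\,\psi_i(x,\dot G)$ requires only that the stem $V(p)$ satisfy $\psi_i(\dot n,\cdot)$ for some $n$, not that all paths do; so the reason for the $\emptyset''$ bound is not that ``certifying no such subtree exists is $\Pi^0_2$''. As the paper states, the extra quantifier comes simply from the fact that the set of conditions is only $\emptyset''$-computable (perfectness of a computable tree being a $\Pi^0_2$ property of its index), and $1$-decidability in $A$ requires $\mathbb P$ itself to be $A$-computable.
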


\begin{proof}
Part \ref{I:Cohen} is clear. Part \ref{I:JS} is essentially by the proof of the low basis theorem, as forcing the jump involves asking whether a particular $\Pi^0_1$ subclass is non-empty. Part \ref{I:S} is similar, but we gain an extra quantifier as the set of conditions is only computable in $\emptyset''$.
%The same argument, relativized to $\emptyset^{(n-1)}$, also gives part \ref{I:Solovay}, but we gain an extra quantifier because the subclass must have positive measure.
Part \ref{I:Mathias} follows by Lemma 4.3 of \cite{CDH-2012}, and \ref{I:KS} is implicit in Lemmas 2.10 and 2.11 of \cite{SS-1999}.
\end{proof}

Note that for any of the notions $\mathbb{P}$ in the previous proposition, the same bounds on $1$-decidability apply also to $\mathbb{C} \times \mathbb{P}$.

We can now state and prove our extension of Theorem \ref{T:Cohen_gen}.

\begin{theorem}\label{T:nonjoin_generic}
Let $A \subseteq \omega$ be given, and let $\mathbb{P} = (P,\leq,V)$ be a notion of forcing such that $\mathbb{C} \times \mathbb{P}$ is $1$-decidable in $A$. There exists an $A$-computable perfect tree $T \subseteq 2^{<\omega}$, and an $A$-computable class $\mathcal{C}$ of dense subsets of $P$, such that for all $S \in [T]$ and all $G \subseteq \omega$, if $G$ is $\mathcal{C}$-generic then $S \oplus G \ngeq_T A$.
\end{theorem}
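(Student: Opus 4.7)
The plan is to mimic the proof of Theorem \ref{T:Cohen_gen} verbatim, with the role of Cohen conditions $\tau \in 2^{<\omega}$ taken over by conditions $p \in P$, and the role of the $\emptyset'$ oracle taken over by the $1$-decidability function for $\mathbb{C} \times \mathbb{P}$. We build $A$-computably in stages a perfect tree $T \subseteq 2^{<\omega}$, an auxiliary set $B \subseteq \omega$, and a uniformly $A$-computable family $\mathcal{C} = \{D_e\}_{e \in \omega}$ of dense subsets of $P$, in order to meet the requirements
\[
\mathcal{R}_e : \text{for every } S \in [T] \text{ and every } G \text{ meeting } D_e, \quad \Phi_e(S \oplus G) \neq B.
\]
Since every $\mathcal{C}$-generic $G$ meets each $D_e$ and since $B \leq_T A$, meeting these requirements suffices.

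At stage $s = \seq{e,t}$, let $\sigma_0,\dots,\sigma_{n-1}$ enumerate the maximal strings of $T_s$, and let $p_0,\dots,p_{m-1}$ enumerate the conditions of $P$ with code $< s$ not yet sent into $D_e$. Just as in the Cohen proof, for each pair $(i,j)$ taken in lex order, we apply the $1$-decidability function for $\mathbb{C} \times \mathbb{P}$ to the $\Sigma^0_1$ sentence
\[
\varphi_{i,j} \equiv \exists y \, \Phi_e^{\dot{S} \oplus V(\dot{G})}(x_{i,j}) \downarrow = y,
\]
for a fresh witness $x_{i,j} > s$, starting from any product condition extending $(\sigma_{i,j-1}, q_{j,i-1})$ whose components have matching lengths (obtained by first invoking clause~(3) of Definition~\ref{D:forcing} to lengthen $V(q_{j,i-1})$ and then padding $\sigma_{i,j-1}$). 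This yields $(\sigma_{i,j}, q_{j,i})$ that either forces $\varphi_{i,j}$ (in which case the value $b$ with $\Phi_e^{\sigma_{i,j} \oplus V(q_{j,i})}(x_{i,j}) \downarrow = b$ is already known by definition of strong forcing at the $\Sigma^0_1$ level) or forces its negation. In the former case we set $B(x_{i,j}) := 1 - b$. Once all pairs have been processed, for each $j$ we enumerate $q_{j,n-1}$ into $D_{e,s+1}$ to ensure $D_e$ is dense below $p_j$, and for each $i$ we extend $T$ by putting into $T_{s+1}$ all initial segments of suitable length-matched extensions $\sigma_i^\ast \succeq \sigma_{i,m-1}$, together with both $\sigma_i^\ast{}^\frown 0$ and $\sigma_i^\ast{}^\frown 1$, so that $T$ remains perfect.

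For the verification, fix $e$, a path $S \in [T]$, and a $\mathcal{C}$-generic filter $F$ on $\mathbb{P}$ with $G = V(F)$. Density forces some $q \in F \cap D_e$; this $q$ was enumerated as $q_{j,n-1}$ at some stage $s = \seq{e,t}$. The unique $i$ with $\sigma_i \preceq S$ singles out the pair $(\sigma_{i,j}, q_{j,i})$ from the construction at stage $s$, and upward closure of $F$ together with the chain $q_{j,n-1} \leq \cdots \leq q_{j,i}$ places $q_{j,i}$ in $F$, so $V(q_{j,i}) \prec G$ by monotonicity of $V$; similarly $\sigma_{i,j} \preceq \sigma_{i,m-1} \preceq \sigma_i^\ast \preceq S$. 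In the convergence branch, use-monotonicity gives $\Phi_e^{S \oplus G}(x_{i,j}) = b \neq B(x_{i,j})$; in the divergence branch, any supposed convergence of $\Phi_e^{S \oplus G}(x_{i,j})$ would be witnessed by finite initial segments $\sigma' \preceq S$ and $V(q') \preceq G$ for some $q' \in F$ extending $q_{j,i}$, making $(\sigma', q')$ a product extension forcing $\varphi_{i,j}$, contradicting the choice of $(\sigma_{i,j}, q_{j,i})$. Either way, $\Phi_e(S \oplus G) \neq B$.

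The main obstacle over the Cohen case is essentially interfacial rather than conceptual. One must (i) keep the product conditions valid by matching $|\sigma| = |V(p)|$ throughout the combinatorial scheduling (this is handled uniformly by clause~(3) of Definition~\ref{D:forcing}), and (ii) ensure that $T$ remains perfect while the intermediate strings $\sigma_{i,j}$ all end up as initial segments of some path in $[T]$. The one conceptual point worth noting is that forcing-implies-truth is invoked here not for a Cohen-generic $S$ (which we do not have — $S$ is merely an arbitrary path through $T$) but for $\Sigma^0_1$ facts about $\dot{S} \oplus V(\dot{G})$, whose truth at $(S,G)$ follows from the convergence or divergence of a single computation on a finite oracle, so the decision made by $(\sigma_{i,j}, q_{j,i})$ in $\mathbb{C} \times \mathbb{P}$ transfers directly to $S \oplus G$.
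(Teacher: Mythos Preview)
Your proposal is correct and follows essentially the same approach as the paper's proof: build $T$, $B$, and the dense sets $D_e$ by stages, at each stage using $1$-decidability of $\mathbb{C}\times\mathbb{P}$ to settle a $\Sigma^0_1$ computation fact for every pair (leaf of $T_s$, small condition), then diagonalize $B$ accordingly. The only cosmetic difference is that you decide the sentence ``$\Phi_e(\dot G)(x)\downarrow$'' and read off the output value afterward (as in the Cohen proof of Theorem~\ref{T:Cohen_gen}), whereas the paper decides ``$\Phi_e(\dot G)(x)\downarrow = 1$'' directly and puts $x$ into $B$ in the negative case; both work for the same reason.
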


\begin{proof}
The proof is similar to that of Theorem \ref{T:Cohen_gen}, so we just highlight the differences. The construction is now computable in $A$, the sets $D_e$ are subsets of $P$, and we let $\mathcal{C} = \{D_e\}_{e \in \omega}$. The requirements take the following form.
\[
\begin{array}{lll}
\mathcal{R}_e & : & \textrm{If } S \in [T] \textrm{ and } F \textrm{ is a filter on } (P,\leq) \textrm{ with } |V(F)| = \infty \textrm{ that meets } D_e,\\
& & \textrm{then } \Phi_e(S \oplus V(F)) \neq B. 
\end{array}
\]
The construction is modified in that at stage $s = \seq{e,t}$ we fix an enumeration $\seq{p_j : j < m}$ of all conditions smaller than $s$, and instead of building two sequences of strings, $\sigma_i \preceq \sigma_{i,0} \preceq \cdots \preceq \sigma_{i,m-1}$ and $\tau_j \preceq \tau_{j,0} \preceq \cdots \preceq \tau_{j,n-1}$, the latter is replaced by a sequence $p_j \geq p_{j,0} \geq \cdots \geq p_{j,n-1}$ of conditions. For all $i,j$, we ensure that $|\sigma_{i,j}| = |V(p_{j,i})|$ and that one of the following applies:
\begin{enumerate}
\item $(\sigma_{i,j},p_{i,j}) \forces_{\mathbb{C} \times \mathbb{P}} \Phi_e(\dot{G})(x_{i,j}) \downarrow = 1$;
\item $(\sigma_{i,j},p_{i,j}) \forces_{\mathbb{C} \times \mathbb{P}} \neg(\Phi_e(\dot{G})(x_{i,j}) \downarrow = 1)$.
\end{enumerate}
This can be done $A$-computably, using the fact that $\mathbb{C} \times \mathbb{P}$ is $1$-decidable in $A$ to decide the $\Sigma^0_1$ sentence $\Phi_e(\dot{G})(x_{i,j}) \downarrow = 1$. The definitions of $T_{s+1}$, and $D_{e,s+1}$, are then entirely analogous to their definitions in the proof of Theorem \ref{T:Cohen_gen}, and $B_{s+1}$ is obtained from $B_s$ by adding $x_{i,j}$ for all $i,j$ for which case \ref{I:strdiv} applies.

For the verification, fix $e$, let $G \subseteq \omega$ be $\mathcal{C}$-generic, and let $S$ be any element of $[T]$. Let $F$ be a filter on $(P,\leq)$ such that $G = V(F)$, so that $|V(F)|=\infty$ and $F$ meets $D_e$. Choose $p \in F \cap D_e$, let $s$ be the least stage such that $p \in D_{e,s}$, and let $\sigma$ be a maximal initial segment of $S$ in $T_s$, so that $|\sigma| = |V(p)|$. By construction, there is an $x$ such that one of the following cases applies:
\begin{enumerate}
\item $B(x) = 0$ and $(\sigma,p) \forces_{\mathbb{C} \times \mathbb{P}} \Phi_e(\dot{G})(x) \downarrow = 1$;
\item $B(x) = 1$ and $(\sigma,p) \forces_{\mathbb{C} \times \mathbb{P}} \neg(\Phi_e(\dot{G})(x) \downarrow = 1)$.
\end{enumerate}
If case \ref{I:conv} holds, then by the general definition of forcing, $\Phi_e(V_{\times}(\sigma,p))(x) \downarrow = 1$, and hence $\Phi_e(\sigma \oplus V(p))(x) \downarrow = 1 \neq B(x)$. It follows that $\Phi_e(S \oplus V(F))(x) \downarrow = 1 - B(x)$ since $\sigma \prec S$ and $V(p) \prec V(F)$. Now suppose case \ref{I:strdiv} holds. If $\Phi_e(S \oplus V(F))(x) \downarrow = 1$ then there exists some $n \geq |\sigma| = |V(p)|$ such that $\Phi_e(S \res n \oplus V(F) \res n)(x) \downarrow = 1$. As $|V(F)| = \infty$, $F$ necessarily contains some $q \in P$ with $|V(q)| \geq n$, and as $F$ is a filter, we may assume $q \leq p$. Let $\tau$ be any initial segment of $S$ of length $|V(q)|$. Then $(\tau,q)$ is an extension of $(\sigma,p)$ in $\mathbb{C} \times \mathbb{P}$ that forces $\Phi_e(\dot{G})(x) \downarrow = 1$, which is impossible. Hence, it must be that $\Phi_e(S \oplus V(F))(x) \uparrow$ or $\Phi_e(S \oplus V(F))(x) \downarrow = 0$.
\end{proof}

Now suppose $\mathbb{P}$ is one of the notions of forcing listed above. We obtain the following consequences.

\begin{corollary}\label{C:all_notions}
Let $\mathbb{P} = (P,\leq,V)$ be any of the notions of forcing discussed above, and let $A \subseteq \omega$ be such that $\mathbb{P}$ is $1$-decidable in $A$, as calibrated in Proposition \ref{P:decidability}. (Hence $\mathbb{C} \times \mathbb{P}$ is $1$-decidable in $A$, as remarked above.) There exists an $A$-computable class $\mathcal{C}$ of dense subsets of $P$, and for each non-computable set $B \subseteq \omega$ an $S \nleq_T B$, such that for all $G \subseteq \omega$, if $G$ is $\mathcal{C}$-generic for $\mathbb{P}$ then $S \oplus G \ngeq_T A$.
\end{corollary}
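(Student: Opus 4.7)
The plan is to apply Theorem~\ref{T:nonjoin_generic} directly to the given $\mathbb{P}$ and $A$. For each $\mathbb{P}$ listed, Proposition~\ref{P:decidability} supplies an $A$ witnessing $1$-decidability, and the remark immediately following that proposition tells us the same bound applies to the product $\mathbb{C} \times \mathbb{P}$. Thus Theorem~\ref{T:nonjoin_generic} furnishes an $A$-computable perfect tree $T \subseteq 2^{<\omega}$ and an $A$-computable class $\mathcal{C}$ of dense subsets of $P$ such that, whenever $S \in [T]$ and $G$ is $\mathcal{C}$-generic for $\mathbb{P}$, one has $S \oplus G \ngeq_T A$. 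This $\mathcal{C}$ is the class promised in the corollary; crucially, it depends only on $\mathbb{P}$ and $A$, not on the auxiliary set $B$.

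The only remaining step is, for each non-computable $B \subseteq \omega$, to exhibit some $S \in [T]$ with $S \nleq_T B$. This is a pure cardinality observation: $T$ is perfect, so $[T]$ has cardinality $2^{\aleph_0}$, whereas the collection $\{X \subseteq \omega : X \leq_T B\}$ is countable. Hence we can choose an $S \in [T]$ outside this collection, and Theorem~\ref{T:nonjoin_generic} immediately yields $S \oplus G \ngeq_T A$ for every $\mathcal{C}$-generic $G$.

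There is no substantive obstacle to overcome here: all of the real work was carried out in Theorem~\ref{T:nonjoin_generic} (the construction of $T$ and $\mathcal{C}$ via $1$-decidability of $\mathbb{C} \times \mathbb{P}$) and in Proposition~\ref{P:decidability} (the calibration of $A$ for each listed forcing notion). The corollary simply packages those two ingredients, exploiting the perfectness of $T$ to sidestep any prescribed non-computable $B$.
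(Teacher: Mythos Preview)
Your proof is correct and follows essentially the same approach as the paper: apply Theorem~\ref{T:nonjoin_generic} to obtain the perfect tree $T$ and the class $\mathcal{C}$, then use perfectness of $T$ to select a path $S \nleq_T B$. The paper's proof is terser, simply invoking the analogy with Corollary~\ref{C:Cohen_gen}, but the content is identical.
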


\begin{proof}
The proof is similar to that of Corollary \ref{C:Cohen_gen}, which is just the special case when $\mathbb{P} = \mathbb{C}$, $A = \emptyset'$, and $B = \emptyset^{(n-1)}$. Theorem \ref{T:nonjoin_generic} produces the class $\mathcal{C}$, as well as a perfect tree $T$ through which we choose an infinite path $S \nleq_T B$. By construction, $S \oplus G \ngeq_T A$ for all $\mathcal{C}$-generic $G$.
\end{proof}

We conclude this section by noting an interesting consequence of the corollary to Kumabe-Slaman forcing. As noted above, general Kumabe-Slaman forcing, i.e., forcing with pairs $(\Phi,\vec{X})$, where $\vec{X}$ ranges over arbitrary finite collections of sets, cannot be readily coded as a subset of $\omega$. One could nonetheless ask whether some such coding is possible, and in particular, whether the notion can be made $1$-decidable in some arithmetical set $A$. The corollary implies that this cannot be so. Indeed, the proof of Theorem \ref{T:Slaman_Shore} in \cite{SS-1999} for $n$ only uses Kumabe-Slaman with $\emptyset^{(n)}$-computable sets, and produces a set $G = V(F)$ for a filter $F$ that decides every $\Sigma^0_n$ sentence. It is not difficult to see that if $n \geq m+1$ then any such $G$ is generic for every $\emptyset^{(m)}$-computable class of subsets of conditions. So, if Kumabe-Slaman forcing were $1$-decidable in $\emptyset^{(m)}$, we could apply the corollary to find an $S \nleq_T \emptyset^{(n-1)}$ such that for all $G$ as above, $S \oplus G \ngeq_T \emptyset^{(m)}$, and hence certainly $S \oplus G \ngeq_T G^{(n)}$, giving a contradiction.

\section{A non-joining theorem for randoms}\label{S:randoms}

We now ask  whether the set $G$ of the Shore-Slaman theorem, Theorem \ref{T:Slaman_Shore}, can be chosen to be $n$-random. Randomness and genericity are each a notion of typicality, so a negative answer would suitably complement Corollary \ref{C:all_notions}. In this section, we show that the answer is indeed negative.

Although randomness and genericity are orthogonal notions in most respects, it is possible to think of randomness as a notion of genericity in a limited way. Namely, by a result of Kautz, the weakly $n$-random sets  can be characterized in terms of genericity for Solovay forcing with $\Pi^0_n$ classes of positive measure (see \cite[Section 7.2.5]{DH-2012}). 
While it may at first seem possible to obtain the result for $n$-randomness as just another application of Theorem \ref{T:nonjoin_generic}, it is worth pointing out that this is not the case. The reason is that forcing with $\Pi^0_n$ classes of positive measure is only $1$-decidable in $\emptyset^{(n+2)}$, which means that Theorem \ref{T:nonjoin_generic}  requires a higher level of genericity than is used in Kautz's characterization. This level of genericity no longer corresponds to $n$-randomness.

Our goal, then, is to give a direct argument for the following result, which is analogous to Theorem \ref{T:Cohen_gen}.

\begin{theorem}\label{T:nonjoin_random}
For any non-computable set $A \leq_T \emptyset'$, 
there exists a $\emptyset'$-computable perfect tree $T \subseteq 2^{<\omega}$ such that for all $S \in [T]$ and all $R \subseteq \omega$, if $R$ is $2$-random then $S \oplus R \ngeq A$.
\end{theorem}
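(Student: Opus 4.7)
I would model the proof on that of Theorem \ref{T:Cohen_gen}, replacing the dense sets $D_e$ by Martin-L\"of tests relative to $\emptyset'$, and using the fact that $R$ is $2$-random if and only if it is Martin-L\"of random relative to $\emptyset'$. It then suffices, for each $e$, to build a uniformly $\Sigma^0_1(\emptyset')$ sequence $\{U_{e,k}\}_k$ of open sets with $\mu(U_{e,k}) \leq 2^{-k}$, such that $R \in \bigcap_k U_{e,k}$ whenever there is $S \in [T]$ with $\Phi_e(S \oplus R) = A$: a $2$-random $R$ must escape each such test, which forces $\Phi_e(S \oplus R) \neq A$. The essential classical input is the theorem of de Leeuw, Moore, Shapiro, and Shannon: because $A$ is non-computable, $\mu\{R : \Phi_e(\sigma \oplus R) = A\} = 0$ for every $\sigma$ and $e$, so the measures $\mu\{R : \Phi_e(\sigma \oplus R) \res N \downarrow = A \res N\}$ converge to $0$ as $N \to \infty$; moreover, since $A \leq_T \emptyset'$, these measures are uniformly left-c.e.\ in $\emptyset'$, which lets the construction run $\emptyset'$-computably.

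The construction of $T$ follows Theorem \ref{T:Cohen_gen} nearly verbatim. At stage $s = \seq{e, t}$ one splits each maximal string of $T_s$, enumerates all $\tau_j < s$, and for each pair $(i, j)$ uses $\emptyset'$ to find extensions $\sigma_{i,j} \succeq \sigma_i$, $\tau_{j,i} \succeq \tau_j$, and a fresh witness $x_{i, j}$ satisfying case \ref{I:conv} or case \ref{I:strdiv}. The difference is in how we process the outcomes. Cases other than case \ref{I:conv} with $b = A(x_{i,j})$ give, just as in Theorem \ref{T:Cohen_gen}, a permanent disagreement $\Phi_e(S \oplus R) \neq A$ for every $S \succeq \sigma_{i,j}$ in $T$ and every $R \succeq \tau_{j,i}$: case \ref{I:strdiv} forces divergence at $x_{i, j}$, and case \ref{I:conv} with $b \neq A(x_{i,j})$ forces the ``wrong'' value. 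In the remaining case, case \ref{I:conv} with $b = A(x_{i,j})$, we cannot force disagreement since $A$ is given; instead we enumerate the cylinder $\cylinder{\tau_{j,i}}$ into $U_{e, k}$ for each $k \leq s$. To bound the total measure of $U_{e, k}$, we first $\emptyset'$-computably lengthen $\tau_{j, i}$ (this preserves both cases, since extending $\tau_{j, i}$ preserves convergence values and strong divergence) until $2^{-|\tau_{j, i}|} \leq 2^{-s - k - e - i - j - 3}$; summing over stages then yields $\mu(U_{e, k}) \leq 2^{-k}$.

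The verification parallels that of Theorem \ref{T:Cohen_gen}. Suppose, for contradiction, that $R$ is $2$-random, $S \in [T]$, and $\Phi_e(S \oplus R) = A$. For each $k$, at a sufficiently late stage $s$ where $\tau_j = R \res s$ is processed and $\sigma_i \preceq S$ is the relevant maximal string, the construction's outcome must have been case \ref{I:conv} with $b = A(x_{i,j})$---the other cases each directly contradict $\Phi_e(S \oplus R) = A$---so if $R \succeq \tau_{j, i}$ then $R \in U_{e, k}$ as required. The main obstacle is guaranteeing that at some such stage $R$ does extend the chosen $\tau_{j,i}$; this is achieved by organizing the enumeration so that at each stage the processed cylinders $\cylinder{\tau_{j, i}}$ together cover a positive measure of $\cylinder{\tau_j}$, and then applying a Borel--Cantelli style argument within the randomness of $R$, while simultaneously maintaining the measure budget for $U_{e, k}$. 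Once this balancing is set up correctly, $R \in \bigcap_k U_{e, k}$ follows for each $e$, contradicting $R$'s $2$-randomness.
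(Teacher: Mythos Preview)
Your proposal has a genuine gap, exactly at the point you flag yourself. In the Cohen argument, genericity guarantees that $G$ meets the dense set $D_e$, so $G$ automatically extends one of the chosen $\tau_{j,i}$. In your adaptation there is no such guarantee: you need $R$ to extend some $\tau_{j,i}$ in order to draw \emph{any} conclusion (divergence, wrong value, or membership in $U_{e,k}$), but you also insist on lengthening $\tau_{j,i}$ until $2^{-|\tau_{j,i}|}$ is tiny, precisely so that the measure budget for $U_{e,k}$ is met. These two demands pull in opposite directions, and the ``Borel--Cantelli style argument'' cannot reconcile them: there is no independence to exploit, and nothing stops the adversary from arranging that at every stage and for every $\tau_j \prec R$, the cases you \emph{can} handle (divergence, wrong value) occupy a negligible fraction of $\cylinder{\tau_j}$ while case \ref{I:conv} with $b=A(x_{i,j})$ occupies almost all of it. You are then forced either to enumerate large measure into $U_{e,k}$ or to pick $\tau_{j,i}$ that $R$ avoids.

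The paper's proof takes a structurally different route and avoids choosing $\tau$-extensions altogether. The test is defined globally by $U_i = \{\tau : (\exists \sigma \in T)[\Phi(\sigma \oplus \tau) \succeq A \res n_i]\}$, so any $R$ with $\Phi(S\oplus R)=A$ for some $S\in[T]$ lies in every $U_i$ automatically; the entire burden shifts to keeping $\mu(U_i)\leq 2^{-i}$ while making $T$ perfect. The de~Leeuw--Moore--Shannon--Shapiro/Sacks input you cite is used, but in a sharper form (Lemma \ref{L:nondensity}): above any $\sigma$ one can find $n$ and an extension $\sigma^*$ such that $m(\sigma', A\res n)$ is small for \emph{every} $\sigma'\succeq\sigma^*$. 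This caps the contribution of each leaf to $U_i$ once and for all. The remaining difficulty is that $T$ has continuum-many paths, so one must prevent splitting from doubling the measure; the paper handles this by a saturation step, extending each leaf to nearly realize $\sup_{\sigma'}m(\sigma',A\res n_t)$ \emph{before} splitting, so that the two children together add only a geometrically shrinking amount. Nothing in your construction plays the role of this saturation step, and without it the per-$\sigma$ bound from Sacks does not control a perfect tree.
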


The following corollary is then obtained exactly as Corollary \ref{C:Cohen_gen} was above.

\begin{corollary}
For all $n \geq 2$, there exists an $S \nleq_T \emptyset^{(n-1)}$ such that for all $G \subseteq \omega$, if $G$ is $2$-random then $S \oplus G \ngeq_T \emptyset'$. In particular, $S \oplus G \ngeq_T G^{(n)}$.
\end{corollary}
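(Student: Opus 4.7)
The plan is to mirror the proof of Theorem~\ref{T:Cohen_gen}, replacing the sequence of dense sets $\{D_e\}_e$ by Martin-L\"of tests $\{U_{e,k}\}_k$ relative to $\emptyset'$, and using the given set $A$ in place of the constructed auxiliary set $B$. Concretely, I will construct $\emptyset'$-computably a perfect tree $T \subseteq 2^{<\omega}$ together with, for each $e$, a uniformly $\emptyset'$-c.e.\ sequence of open sets $\{U_{e,k}\}_k$ with $\mu(U_{e,k}) \le 2^{-k}$, meeting the requirements
\[
\mathcal{R}_{e,k} : \; S \in [T] \text{ and } R \notin U_{e,k} \;\Longrightarrow\; \Phi_e(S \oplus R) \ne A.
\]
Since a $2$-random $R$ avoids each $\emptyset'$-Martin-L\"of test, for every $e$ there exists some $k$ with $R \notin U_{e,k}$, forcing $\Phi_e(S \oplus R) \ne A$ for all $S \in [T]$, and hence $S \oplus R \not\ge_T A$.

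At stage $s = \pair{e,k}$, I first split each maximal node of $T_s$ to obtain new leaves $\sigma_0,\dots,\sigma_{n-1}$, then process each $\sigma_i$ analogously to the pair-by-pair processing of Theorem~\ref{T:Cohen_gen}. For each pair $(\sigma_i,\tau)$ with $\tau$ ranging over binary strings up to a suitable stage-dependent length, I use the $1$-decidability of Cohen forcing in $\emptyset'$ to find extensions $\sigma^* \succeq \sigma_i$ and $\tau^* \succeq \tau$ of equal length, along with a fresh witness $x$, such that either $\Phi_e^{\sigma^* \oplus \tau^*}(x) \downarrow = b$ for some $b \in \{0,1\}$ (Case~1), or $\Phi_e^{\sigma \oplus \tau'}(x) \uparrow$ for every $\sigma \succeq \sigma^*$ and $\tau' \succeq \tau^*$ (Case~2). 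In Case~2, or in Case~1 with $b \ne A(x)$, the requirement is automatically secured for every $R \succeq \tau^*$ and $S \succeq \sigma^*$, since then $\Phi_e(S \oplus R)(x) \ne A(x)$. In the "bad" subcase of Case~1, where $b = A(x)$, I enumerate the cylinder $[\tau^*]$ into $U_{e,k}$, having first inflated $|\tau^*|$ (and correspondingly $|\sigma^*|$) so that its contribution $2^{-|\tau^*|}$ to $\mu(U_{e,k})$ is sufficiently small.

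The hard part will be to simultaneously guarantee that (i) every "bad" $R$ (i.e.\ any $R$ such that $\Phi_e(S \oplus R) = A$ for some $S \in [T]$) lies in $U_{e,k}$, and (ii) $\mu(U_{e,k}) \le 2^{-k}$. For (i) I iterate $\tau$ over all binary strings of a stage-dependent length so that every $R \in 2^\omega$ falls within some processed cylinder, and for (ii) I inflate $|\tau^*|$ in each bad subcase so that the contributions form a convergent series bounded by $2^{-k}$. The key difficulty is verifying that the bad subcase does not overwhelm the measure budget. This is where the non-computability of $A$ is used essentially: for every $\sigma$, $\mu\{R : \Phi_e(\sigma \oplus R) = A\} = 0$, because otherwise a Lebesgue density argument would produce a $\tau \in 2^{<\omega}$ from which $A$ can be computed by majority vote over $R \in [\tau]$, making $A$ computable. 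Consequently the bad portion can be exhausted within the allotted budget as $|\tau^*|$ and the stage grow, and a careful budgeting (along the lines of the standard construction of $\emptyset'$-Martin-L\"of tests covering $\Sigma^0_1(\emptyset')$ null sets) delivers the required bound.

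Finally, the verification that each $\mathcal{R}_{e,k}$ is met proceeds exactly as in Theorem~\ref{T:Cohen_gen}: given any $2$-random $R \notin U_{e,k}$ and any $S \in [T]$, I identify the stage $s = \pair{e,k}$ at which the leaf $\sigma^* \prec S$ was produced and the cylinder $\tau$ with $\tau \prec R$ was processed, and the Case~1/Case~2 dichotomy at that pair forces $\Phi_e(S \oplus R)(x) \ne A(x)$ for the witness $x$ chosen then. The corollary then follows in the same way as Corollary~\ref{C:Cohen_gen}: pick $S \in [T]$ with $S \not\le_T \emptyset^{(n-1)}$, which is possible because $T$ is perfect.
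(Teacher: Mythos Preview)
Your proposal is really an attempt to re-prove Theorem~\ref{T:nonjoin_random} (from which the corollary follows trivially by picking $S\in[T]$ with $S\nleq_T\emptyset^{(n-1)}$), but the direct port of the Cohen-forcing argument has a genuine gap.

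The problem is your covering claim (i). When you process a pair $(\sigma_i,\tau)$, the Case~1/Case~2 dichotomy produces extensions $\sigma^*\succeq\sigma_i$ and $\tau^*\succeq\tau$, and every conclusion you draw---disagreement, divergence, or enumeration of $[\tau^*]$ into $U_{e,k}$---applies only to reals $R\succeq\tau^*$. A real $R\in[\tau]\setminus[\tau^*]$ is left completely unhandled. In the Cohen argument this is harmless because the $\tau^*$'s are enumerated into a dense set $D_e$ and a weakly $2$-generic $G$ is \emph{forced} to extend one of them; a $2$-random $R$ has no such obligation with respect to the finitely many $\tau^*$'s produced at stage $\langle e,k\rangle$. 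Your verification step (``identify \ldots\ the cylinder $\tau$ with $\tau\prec R$ \ldots\ the dichotomy at that pair forces $\Phi_e(S\oplus R)(x)\neq A(x)$'') silently assumes $R\succ\tau^*$, not merely $R\succ\tau$.

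One might try to recurse on the uncovered portion $[\tau]\setminus[\tau^*]$, but each recursive step further extends $\sigma_i$, so termination within a single stage is unclear; and your measure justification (that $\mu\{R:\Phi_e(\sigma\oplus R)=A\}=0$) concerns the \emph{total} computation equalling $A$, whereas the ``bad'' subcase is about a single bit $\Phi_e^{\sigma^*\oplus\tau^*}(x)$ agreeing with $A(x)$, which can hold on a set of measure close to $1$. The paper's proof sidesteps both issues by abandoning the per-witness dichotomy entirely. Lemma~\ref{L:nondensity} (via Sacks' theorem) locates, for each leaf $\sigma_i$, an $n_s$ and an extension $\sigma_i^*$ above which $m(\sigma,A\res n_s)=\mu\{X:\Phi(\sigma\oplus X)\succeq A\res n_s\}$ is uniformly small; the test component $U_s$ is then simply $\{\tau:(\exists\sigma\in T)\,\Phi(\sigma\oplus\tau)\succeq A\res n_s\}$, which by definition contains every $R$ with $\Phi(S\oplus R)=A$ for some $S\in[T]$, and the supremum trick~\eqref{E:supremum} bounds the cumulative measure as $T$ branches.
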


To prove the theorem, we need the following lemma. Fix a Turing reduction $\Phi$, and define $m(\tau,\rho) = \mu(\{X :\Phi(\tau \oplus X) [2\cdot |\tau|] \succeq \rho\})$ for all $\tau,\rho \in 2^{<\omega}$, where $\mu$ is the uniform measure on Cantor space.

\begin{lemma}\label{L:nondensity}
Let $A \subseteq \omega$ be non-computable. For all rational $q > 0$ and all $\sigma \in 2^{<\omega}$, there is an $n \in \omega$ such that the set of $\tau$ with $m(\tau, A \res n) \geq q$ is not dense above $\sigma$.
\end{lemma}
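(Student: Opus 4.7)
The plan is to prove the contrapositive: assuming $\mathcal{F}_n := \{\tau : m(\tau, A\res n) \geq q\}$ is dense above $\sigma$ for every $n$, I will derive that $A$ is computable, contradicting non-computability. The first observation is that $m$ is monotone in its first argument: if $\tau' \succeq \tau$ then any convergent computation $\Phi(\tau \oplus X)[2|\tau|] \succeq \rho$ extends verbatim to $\Phi(\tau' \oplus X)[2|\tau'|] \succeq \rho$, so $m(\tau, \rho) \leq m(\tau', \rho)$. Each $\mathcal{F}_n$ is therefore upward closed, so the open set $F_n := \bigcup_{\tau \in \mathcal{F}_n}\cylinder{\tau} \cap \cylinder{\sigma}$ is dense in $\cylinder{\sigma}$, and by Baire category $F^* := \bigcap_n F_n$ is comeager in $\cylinder{\sigma}$ and in particular non-meager in $2^\omega$.

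The heart of the argument is to show $A \leq_T Y$ for every $Y \in F^*$. For each $n$, membership $Y \in F_n$ supplies some $k$ with $Y\res k \in \mathcal{F}_n$, so by monotonicity $\mu_X(\{X : \Phi(Y \oplus X) \succeq A\res n\}) \geq m(Y\res k, A\res n) \geq q$. These events are nested as $n$ grows and intersect in $\{X : \Phi(Y \oplus X) = A\}$, so continuity of measure from above yields $\mu_X(\{X : \Phi(Y \oplus X) = A\}) \geq q > 0$. Sacks's theorem, relativized to the oracle $Y$ (the standard Lebesgue-density and majority-vote proof relativizes directly), then delivers $A \leq_T Y$.

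To close the contradiction, I would invoke the standard Kuratowski--Martin category fact that $\{Z : A \leq_T Z\} = \bigcup_{\Psi} \{Z : \Psi^Z = A\}$ is meager whenever $A$ is non-computable: each summand is closed, and any cylinder $\cylinder{\tau}$ it contained would make $A = \Psi^{\tau 0^\omega}$ computable. Combined with the inclusion $F^* \subseteq \{Z : A \leq_T Z\}$ from the preceding paragraph, this would force $F^*$ to be meager, contradicting its non-meagerness. The main obstacle is the measure-continuity step and the clean invocation of the relativized form of Sacks's theorem; once $A \leq_T Y$ is secured for every $Y \in F^*$, the Baire-category clash is immediate.
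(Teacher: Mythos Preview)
Your argument is essentially the paper's, recast in Baire-category language. The paper simply chooses a single $G \succ \sigma$ that is $1$-generic relative to $A$; such a $G$ lies in your $F^*$ (the sets $\mathcal{F}_n$ are $A$-computable and dense above $\sigma$, so $G$ meets each one), and the paper's contradiction ``a set $1$-generic relative to $A$ cannot compute $A$'' is precisely a pointwise instance of your ``the upper cone of $A$ is meager''. Both routes invoke the same relativized Sacks step to pass from $\mu(\{X:\Phi(Y\oplus X)=A\})\geq q$ to $A\leq_T Y$.

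One genuine slip to fix: the sets $\{Z:\Psi^Z=A\}$ are \emph{not} closed. Each condition $\Psi^Z(n)\downarrow=A(n)$ is $\Sigma^0_1$ (open), so the intersection over $n$ is $\Pi^0_2$, i.e.\ $G_\delta$. Your sentence ``each summand is closed, and any cylinder it contained would make $A=\Psi^{\tau 0^\omega}$ computable'' therefore does not establish nowhere-density as written. The standard repair: a non-meager $G_\delta$ set is comeager in some basic open $\cylinder{\tau}$; then, to compute $A(n)$, search for any $\tau'\succeq\tau$ with $\Psi^{\tau'}(n)\downarrow$ --- the output must equal $A(n)$, since otherwise $\cylinder{\tau'}$ would be disjoint from the set, contradicting comeagerness in $\cylinder{\tau}$. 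With this correction (or by simply citing the well-known meagerness of non-trivial upper cones), your proof is complete.
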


\begin{proof}
Assume not, and let $q$ and $\sigma$ witness this fact. Let $G \subseteq \omega$ be any set which is $1$-generic relative to $A$ and extends $\sigma$. Then $G$ meets $\{\tau \succeq \sigma : m(\tau,A \res n) \geq q\}$ for all $n$, and hence if we let $E_n =\{X \subseteq \omega: \Phi(G \oplus X) \succeq A \res n\}$, then $\mu (E_n) \geq q$ for all $n$. Let $E = \bigcap_n E_n$. It follows that $\mu (E) \geq q$, and $\Phi(G\oplus X) = A$ for all $X \in E$. By a theorem of Sacks \cite{Sacks-1963}, this implies that $G \geq_T A$, which is impossible since anything $1$-generic relative to a non-computable set cannot compute that set.
%
%To prove the claim, define a tree $T \subseteq 2^{<\omega}$ computably enumerable in $G$ by putting $\rho \in T$ if $\mu(\{X: \Phi(G \oplus X) \succeq \rho\}) \geq q$. By the above argument, $A \in [T]$. Now by definition, there can be no antichain in $T$ of size greater than $1/q$, so we can choose an $n$ such that $A \res n$ is part of a maximum-sized antichain. But then no incomparable strings are ever enumerated into $T$ above $A \res n$, so the set of extensions of $A \res n$ is computable from $G$ and computes $A$.
\end{proof}

\begin{proof}[Proof of Theorem \ref{T:nonjoin_random}]
Because the 2-random sets are closed under the addition of prefixes, we need only consider a single functional $\Phi$ such that $\Phi(X \oplus 0^e1Y) = \Phi_e(X \oplus Y)$ for all sets $X$ and $Y$ and all $e \in \omega$. Assume the function $m$ from above is defined with respect to this $\Phi$.

Computably in $\emptyset'$, we construct the tree $T$ by stages, along with a sequence $\seq{n_i : i \in \omega}$ of numbers. Our test will be defined by  
\[
U_i = \{ \tau \in 2^{<\omega} : (\exists \sigma \in T)[ \Phi(\sigma \oplus \tau) \succeq A \res n_i ]\}.
\]
Clearly, then, $\{U_i\}_{i \in \omega}$ will be a $\emptyset'$-computable sequence of open sets, and if $S \in [T]$ and $R \subseteq \omega$ is such that $\Phi(S \oplus R) =A$, then some initial segment of $R$ will belong to every $U_i$. Thus, to verify the construction below, it will suffice to ensure that $\mu (U_i) \le 2^{-i}$ for all $i$.  

\medskip
\noindent \emph{Construction.}
Let $T_s$ denote the approximation to $T$ at stage $s$, with $T_0 = \{\lambda\}$. At stage $s$, we define  $T_{s+1}$ and $n_s \in \omega$. Let $\seq{\sigma_i : i < m}$ be a listing of the maximal strings in $T_s$. We need to add a split above each of these strings, but before can do so, we will make a series of extensions to each $\sigma_i$. 

First, we  initialize level $s$ of our test. To do this we find $n_s \in \omega$, and for all $i$, a string $\sigma_i^*$ extending $\sigma_i$ such that $m(\sigma, A\res n_s) \leq 2^{-2(s+1)}$ for all $\sigma \succeq \sigma_i^*$.   Lemma~\ref{L:nondensity} establishes that  there exists an $n_s$ and  sequence $\seq{\sigma_i^* : i<m}$ meeting this condition, and an instance can be found using $\emptyset'$.

Now for any path extending $\sigma_i^*$ we have bounded the size of the strings that will be enumerated into $U_s$ due to this path. However, there will be continuum many paths above $\sigma_i^*$ in $T$ so we need to do better than this. For all $t \le s$, we extend each $\sigma_i^*$ in order to force a large number of elements into $U_t$. We determine a sequence of extensions 
\[
\sigma_i^* \preceq \sigma_{i,0} \preceq \ldots  \preceq \sigma_{i,s}
\]
inductively as follows. Suppose we have defined $\sigma_{i,t-1}$ for some $t \leq s$, where for convenience we write $\sigma_{i,-1} = \sigma_i^*$. Then we find $\sigma_{i,t} \succeq \sigma_{i,t-1}$ such that
\begin{equation}\label{E:supremum}
  \sup_{\sigma \succeq \sigma_{i,t-1}}m(\sigma, A \res n_{t}) - m(\sigma_{i,t}, A \res n_{t}) \leq 2^{-2(s+2)}.
\end{equation}

We define $T_{s+1}$ to be downward closure of $\{\sigma_{i,s}b : i < m \wedge b \in \{0,1\}\}$.

\medskip
\noindent \emph{Verification.}
For each $i$ and $s$, define
\[
U_{i,s} = \{ \tau \in 2^{<\omega} : (\exists \sigma \in T_{i+s+1})[ \Phi(\sigma\oplus \tau) \succeq A \res n_i ]\}.
\]
As $T = \bigcup_s T_s$, and $T_s \subseteq T_{s+1}$, we have $U_i = \bigcup_s U_{i,s}$. Note that for all $s$, the measure of $U_{i,s}$ is equal to the sum over the maximal $\sigma \in T_{s+1}$ of $m(\sigma,A \res n_i)$. We prove by induction on $j \in \omega$ that $\mu(U_{i,j}) \leq \sum_{k \leq j} 2^{-(i+k+1)}$, and hence that $\mu(U_i) \leq 2^{-i}$, as desired.

For the base case $j = 0$, the claim holds because there are $2^{i+1}$ many maximal strings in $T_{i+1}$, and for any such string $\sigma$ we have $m(\sigma,\emptyset' \res n_i) < 2^{-2(i+1)}$ by choice of $n_i$. Assume, then, that $j > 0$ and that the claim holds for $j-1$. Let $\sigma$ be a maximal string in $T_{i+j+1}$, and let $\tau \preceq \sigma$ be a maximal string in $T_{i+j}$. By \eqref{E:supremum}, we have that $m(\tau, A\res n_i)$ is within $2^{-2(i+j+1)}$ of $\sup_{\rho \succeq \tau} m(\rho,A \res n_i)$. Thus, the total new contribution of $m(\sigma, A \res n_i)$ to the measure of $U_{i,j}$ can be at most $2^{-2(i+j+1)}$. Since there are $2^{i+j+1}$ many maximal strings in $T_{i+j+1}$, the claim follows.
\end{proof}

For the readers familiar with the basic concepts of algorithmic randomness, we remark that if $T$ is the tree constructed in Theorem \ref{T:nonjoin_random}, then it is easy to see that $\emptyset'$ can construct a set $S\in [T]$ such that $S$ is not $K$-trivial. This gives an example of a set that is not $K$-trivial and cannot be joined above $\emptyset'$ with a 2-random set. This  contrasts with a recent result of Day and Miller \cite{DM-toappear} who show that any set $S$ that is not $K$-trivial can be joined above $\emptyset'$ with an incomplete 1-random set, and indeed even with a weakly 2-random set.

\end{document}